\newtheorem{theorem}{Theorem}
\newtheorem{lemma}[theorem]{Lemma}
\newtheorem{remark}{Remark}
\begin{document}

\nocite{*}

\title{\bf On the Success of Mishandling \\ Euclid's Lemma}

\author{\textsc{Adrian W. Dudek} \\ 
Mathematical Sciences Institute \\
The Australian National University \\ 
\texttt{adrian.dudek@anu.edu.au}}
\date{}

\maketitle

\begin{abstract}
We examine Euclid's lemma that if $p$ is a prime number such that $p | ab$, then $p$ divides at least one of $a$ or $b$. Specifically, we consider the common misapplication of this lemma to numbers that are not prime, as is often made by undergraduate students. We show that a randomly chosen implication of the form $r |ab \Rightarrow r|a \text{ or } r|b$ is \textit{almost surely} false in a probabilistic sense, and we quantify this with a corresponding asymptotic formula.
\end{abstract}

I recently gave a tutorial to my undergraduate number theory class, wherein a severe warning was issued. Specifically, I told them that Euclid's lemma states that if $p$ is a prime and $p|ab$, then $p$ divides at least one of $a$ and $b$ (see Theorem 3 of Hardy and Wright \cite{hardywright}). I followed this with words of warning, grimly telling them that many students seem to let slip the all-important premise that $p$ must be prime. They nodded with spirit -- as undergraduate students often do -- but unfortunately and in spite of my stern words, a suite of assignments now sit on my desk, where Euclid's lemma has been heartily applied in the case where $p$ is not a prime. That is, many have incorrectly supposed that given any old divisor $r$ of the product $ab$, it must be the case that $r$ divides at least one of $a$ and $b$.

Of course, such an implication can sometimes be true, but is certainly not true in general (take, for example, $r=6$, $a=2$ and $b=3$). In fact, it is the purpose of this short article to show that a randomly chosen implication of the form
$$r | ab \Rightarrow r | a \text{ or } r | b$$
is \textit{almost surely} false, that is, the probability of it being true is zero. Moreover, we furnish an asymptotic formula which quantifies this exactly.

\begin{theorem} \label{main}
Let $B(N)$ count the number of triples $(a,b,r)$ of positive integers such that $r | ab$ and $ab \leq N$, and let $A(N)$ count the number of these triples such that $r$ divides at least one of $a$ and $b$. Then we have that
$$\frac{A(N)}{B(N)} \sim \frac{\pi^2}{\log N} $$
as $N \rightarrow \infty$.
\end{theorem}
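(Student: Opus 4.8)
The plan is to evaluate $B(N)$ and $A(N)$ separately, show that each is asymptotic to an explicit power of $N$ times a power of $\log N$, and then take the quotient. First I would rewrite $B(N)$ by grouping triples according to the value of the product $n = ab$. For a fixed $n \le N$ there are exactly $d(n)$ ordered factorizations $n = ab$, and for each such factorization the divisor $r$ may be any of the $d(ab) = d(n)$ divisors of $n$; hence
$$B(N) = \sum_{n \le N} d(n)^2.$$
The asymptotics of this sum are classical. Since the Dirichlet series $\sum_{n\ge 1} d(n)^2 n^{-s} = \zeta(s)^4/\zeta(2s)$ has a pole of order four at $s = 1$ with leading coefficient $1/\zeta(2)$, a standard Tauberian (or Perron contour) argument yields $\sum_{n \le N} d(n)^2 \sim \frac{1}{\zeta(2)}\cdot \frac{N(\log N)^3}{3!} = \frac{N(\log N)^3}{\pi^2}$, on using $\zeta(2) = \pi^2/6$.

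For $A(N)$ I would first observe that whenever $r \mid a$ or $r \mid b$ we automatically have $r \mid ab$, so the hypothesis $r \mid ab$ in the definition of $A(N)$ is redundant, and the count reduces to triples $(a,b,r)$ with $ab \le N$ subject only to $r \mid a$ or $r \mid b$. Fixing $a$ and $b$ and counting the admissible $r$ by inclusion--exclusion, the number of $r$ dividing $a$ or $b$ equals $d(a) + d(b) - d(\gcd(a,b))$, the last term counting the common divisors of $a$ and $b$. Summing over all pairs with $ab \le N$ and exploiting the symmetry in $a$ and $b$ gives
$$A(N) = 2\sum_{ab \le N} d(a) \;-\; \sum_{ab \le N} d(\gcd(a,b)).$$

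The next step is to estimate the two sums. For the first, I would write $\sum_{ab \le N} d(a) = \sum_{a \le N} d(a)\lfloor N/a\rfloor = N\sum_{a\le N} d(a)/a + O(N\log N)$ and invoke the elementary estimate $\sum_{a \le N} d(a)/a \sim \tfrac12(\log N)^2$ to obtain $\sum_{ab \le N} d(a) \sim \tfrac12 N(\log N)^2$; thus the leading term of $A(N)$ is $N(\log N)^2$. For the second sum I would interchange the order of summation, grouping by the common divisor $r$ and substituting $a = ra'$, $b = rb'$, which turns it into $\sum_{r \le \sqrt{N}} \sum_{m \le N/r^2} d(m)$. Since $\sum_{m \le M} d(m) = O(M\log M)$ and $\sum_{r} r^{-2}$ converges, this is $O(N\log N)$, genuinely smaller than the main term. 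Hence $A(N) \sim N(\log N)^2$, and dividing by $B(N) \sim N(\log N)^3/\pi^2$ gives $A(N)/B(N) \sim \pi^2/\log N$, as claimed.

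I expect the main obstacle to be pinning down the exact constant $1/\pi^2$ in the asymptotic for $\sum_{n \le N} d(n)^2$, since this requires either the full contour/Tauberian machinery attached to $\zeta(s)^4/\zeta(2s)$ or a careful elementary (hyperbola-method) derivation, together with verifying that every error term introduced along the way is of strictly lower order than the leading $N(\log N)^3$ and $N(\log N)^2$ terms. The remaining steps are routine applications of standard divisor-sum estimates.
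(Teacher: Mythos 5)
Your proposal is correct and follows essentially the same route as the paper: grouping by $n=ab$ to get $B(N)=\sum_{n\le N}d(n)^2\sim N\log^3 N/\pi^2$, inclusion--exclusion giving $A(N)=2\sum_{ab\le N}d(a)-\sum_{ab\le N}d(\gcd(a,b))$, the substitution $a=ra'$, $b=rb'$ to bound the gcd term by $O(N\log N)$, and the main-term estimate $A(N)\sim N\log^2 N$. The only cosmetic differences are that you sketch the asymptotic for $\sum_{n\le N}d(n)^2$ via the Dirichlet series $\zeta(s)^4/\zeta(2s)$ where the paper simply cites Ramanujan, and you evaluate $\sum_{ab\le N}d(a)$ by summing $d(a)\lfloor N/a\rfloor$ over $a$ rather than summing $\sum_{a\le N/b}d(a)$ over $b$.
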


First, let's calculate $B(N)$. As usual, we let $d(n)$ denote the number of divisors of~$n$. As $B(N)$ counts the number of triples $(a,b,r)$ such that $r | ab$ and $ab \leq N$, it is clear that
\begin{eqnarray*}
B(N) = \sum_{ab \leq N} d(ab)
\end{eqnarray*}
where the sum is over all $a$ and $b$ with $ab \leq N$. Therefore, we have that
\begin{eqnarray*}
B(N) = \sum_{ab \leq N} d(ab) & = & \sum_{n\leq N} \sum_{ab = n} d(ab) \\
& = & \sum_{n\leq N} d(n)^2 \\
& = & \pi^{-2} N \log^3 N + O(N \log^2 N)
\end{eqnarray*}
by a result of Ramanujan \cite{ramanujan}.

We now wish to bound $A(N)$. Writing this as
$$A(N) = \sum_{ab \leq N} \sum_{r | ab} 1_{ \{r | a \text{ or } r |b \} },$$
we first want to bound the number of divisors $r$ of $ab$ such that $r$ divides at least one of $a$ and $b$. Now, inclusion-exclusion gives us, for fixed integers $a$ and $b$, that
\begin{eqnarray*}
\sum_{r | ab} 1_{ \{r | a \text{ or } r |b \} } & = & d(a) + d(b) - \sum_{\substack{r|a \\ r |b}} 1.
\end{eqnarray*}
It follows immediately that
\begin{equation} \label{save}
A(N)  =  \sum_{ab \leq N} (d(a) + d(b))- \sum_{ab \leq N} \sum_{\substack{r|a \\ r |b}} 1.
\end{equation}
We will bound the sum on the right in the following lemma.
\begin{lemma} \label{bound}
We have that
$$\sum_{ab \leq N} \sum_{\substack{r|a \\ r |b}} 1 = O(N \log N).$$
\end{lemma}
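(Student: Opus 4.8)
The plan is to push the innermost variable $r$ to the outside, summing over it first. Since the conditions $r \mid a$ and $r \mid b$ together assert precisely that $r$ is a common divisor of $a$ and $b$, I would reorganise the triple sum as a sum over $r$ of the number of pairs $(a,b)$ that $r$ simultaneously divides. Writing $a = r\alpha$ and $b = r\beta$ with $\alpha,\beta$ positive integers, the constraint $ab \leq N$ becomes $\alpha\beta \leq N/r^2$; in particular there are no admissible pairs once $r > \sqrt{N}$. Hence
\begin{eqnarray*}
\sum_{ab \leq N} \sum_{\substack{r \mid a \\ r \mid b}} 1 & = & \sum_{r \leq \sqrt{N}} \# \left\{ (\alpha, \beta) : \alpha \beta \leq N/r^2 \right\} \\
& = & \sum_{r \leq \sqrt{N}} \sum_{m \leq N/r^2} d(m),
\end{eqnarray*}
where the last equality records that each $m = \alpha\beta$ not exceeding $N/r^2$ arises from exactly $d(m)$ pairs.

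For the inner count I would invoke the elementary divisor estimate $\sum_{m \leq M} d(m) = O(M \log M)$, which follows from the hyperbola method, or more crudely from $\sum_{m \leq M} d(m) = \sum_{k \leq M} \lfloor M/k \rfloor \leq M \sum_{k \leq M} 1/k$. Applying this with $M = N/r^2$ and using $\log(N/r^2) \leq \log N$ to strip the dependence on $r$ out of the logarithm, I obtain
\begin{eqnarray*}
\sum_{r \leq \sqrt{N}} \sum_{m \leq N/r^2} d(m) & \ll & \log N \sum_{r \leq \sqrt{N}} \frac{N}{r^2} \\
& \leq & N \log N \sum_{r=1}^{\infty} \frac{1}{r^2}.
\end{eqnarray*}
Since $\sum_{r=1}^{\infty} r^{-2} = \pi^2/6$ converges, the right-hand side is $O(N \log N)$, which is exactly the claimed bound.

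The step I expect to demand the most care is the uniform control of the inner sum for small $M$: when $r$ approaches $\sqrt{N}$ the quantity $M = N/r^2$ is $O(1)$, so the factor $\log M$ can vanish or turn negative and the clean asymptotic $M\log M$ ceases to be an honest upper bound. I would therefore phrase the estimate as $\sum_{m \leq M} d(m) \leq M(1 + \log M)$, valid for all $M \geq 1$, which stays nonnegative and still yields $M(1+\log M) \leq (N/r^2)(1 + \log N)$ after replacing $\log M$ by $\log N$. The convergence of $\sum_r r^{-2}$ is precisely what collapses the surviving factor to a single power of $\log N$, and this is the feature that keeps the bound at $O(N \log N)$ rather than the $O(N \log^2 N)$ that a cruder treatment of the logarithm would produce.
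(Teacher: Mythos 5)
Your proof is correct and follows essentially the same route as the paper's: both substitute $a = r\alpha$, $b = r\beta$ to turn the triple sum into a count of lattice points under a hyperbola for each $r$, and both rely on the convergence of $\sum_r r^{-2}$ to keep the final bound at a single power of $\log N$. The only difference is bookkeeping --- you package the inner count as the divisor sum $\sum_{m \leq M} d(m) \leq M(1 + \log M)$ and work with clean inequalities truncated at $r \leq \sqrt{N}$, whereas the paper evaluates the triple sum term by term with $O(1)$ error terms; your version is, if anything, slightly tidier.
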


\begin{proof}
Note that we are summing over all positive integers $rc$ and $re$ such that $r^2 ce \leq N$. Thus, we have that
$$\sum_{ab \leq N} \sum_{\substack{r|a \\ r |b}} 1 = \sum_{r \leq N} \sum_{rc \leq N} \sum_{re \leq N/rc} 1,$$
where the second and third summations are over $c$ and $e$ respectively. Using the fact that $\sum_{n \leq x} n^{-1} = \log x +O(1)$, we have that
\begin{eqnarray*}
\sum_{r \leq N} \sum_{rc \leq N} \sum_{re \leq N/rc} 1 & = & \sum_{r \leq N} \sum_{c \leq N/r} \bigg(\frac{N}{r^2c} + O(1) \bigg) \\
& = & N \sum_{r \leq N} \frac{\log(N/r)}{r^2} +O(N \log N) \\
& = & N \log N \sum_{r \leq N} \frac{1}{r^2} - N \sum_{r \leq N} \frac{\log r}{r^2} + O(N \log N)
\end{eqnarray*}
Noting that both of the sums in the above equation are $O(1)$ completes the proof.
\end{proof}

\begin{remark}
From the fact that $r|a$ and $r|b$ if and only if $r | gcd(a,b)$, it follows that the sum in Lemma \ref{bound} is equal to
$$\sum_{ab \leq N} d(\gcd(a,b)).$$
It would be interesting to see an asymptotic formula for this.
\end{remark}

Now, from the symmetry of divisors, (\ref{save}) and the above lemma, it follows that
\begin{eqnarray*}
A(N) & = & 2 \sum_{ab \leq N} d(a) + O(N \log N) \\
& = & 2 \sum_{b \leq N} \sum_{a \leq N/b} d(a) +O(N \log N).
\end{eqnarray*}
Using the classic bound (see Theorem 320 of \cite{hardywright})
$$\sum_{n \leq N} d(n) = N \log N + O(N),$$
we have that
\begin{eqnarray*}
A(N) & =  & 2 \sum_{b \leq N} \bigg( \frac{N}{b} \log \frac{N}{b} + O\bigg(\frac{N}{b}\bigg) \bigg)+O(N \log N)\\
& = & 2 N \log N \sum_{b \leq N} \frac{1}{b} - 2 N \sum_{b \leq N} \frac{\log b}{b} +O(N \log N).
\end{eqnarray*}
By comparison to the integral, we can estimate the middle sum \textit{viz.}
\begin{eqnarray*}
\sum_{b \leq N} \frac{\log b}{b} & = & \int_1^N \frac{\log t}{t} dt + O\bigg( \frac{\log N}{N} \bigg) \\
& = & \frac{\log^2 N}{2} + O\bigg( \frac{\log N}{N} \bigg).
\end{eqnarray*}
It follows that
$$A(N) = N \log^2 N + O(N \log N)$$
and this completes the proof of Theorem \ref{main}.

\subsection*{Acknowledgements}
The author is gracious of the financial support provided by an Australian Postgraduate Award and an ANU Supplementary Scholarship. He would also like to thank the anonymous referees for providing useful feedback, and his number theory class for accidentally motivating the result of this paper.

\bibliographystyle{plain}

\bibliography{biblio}

\end{document}